 \newtheorem{thm}{Theorem}[section]
 \newtheorem{lem}[thm]{Lemma}
 \theoremstyle{definition}
 \newtheorem{defn}[thm]{Definition}
 \theoremstyle{remark}
 \newtheorem*{ex}{Example}
 \numberwithin{equation}{section}
\begin{document}

%
%
%
%
%
%
%
%
%

\title[Compact ADI method for two dimensional RSFDE]
 {Compact ADI method for two-dimensional\\ Riesz space fractional diffusion equation}

\author[Sohrab Valizadeh]{Sohrab Valizadeh}

\address{%
Department of Mathematics\\
Faculty of Sciences\\
University of Mohaghegh Ardabili\\
P. O. Box: 56199-179\\
Ardabil\\
Iran}

\email{valizadeh.s@uma.ac.ir}

\author{Alaeddin Malek}
\address{Department of Mathematics\br
Tarbiat Modares University\br
P. O. Box: 14115-175\br
Tehran\br
Iran}
\email{mala@modares.ac.ir}

\author{Abdollah Borhanifar}
\address{Department of Mathematics\br
Faculty of Sciences\br
University of Mohaghegh Ardabili\br
P. O. Box: 56199-179\br
Ardabil\br
Iran}
\email{borhani@uma.ac.ir}

\subjclass{Primary 34K28, 65M06, 65M12; Secondary 35R11}

\keywords{Riesz fractional derivatives, Crank-Nicolson scheme, alternating direction implicit method, unconditionally stable, maximum error.}

\date{January 1, 2004}

\begin{abstract}
In this paper, a compact alternating direction implicit (ADI) method has been developed for solving two-dimensional Riesz space fractional diffusion equation. The precision of the discretization method used in spatial directions is twice the order of the corresponding fractional derivatives. It is proved that the proposed method is unconditionally stable via the matrix analysis method and the maximum error in achieving convergence is discussed. Numerical example is considered aiming to demonstrate the validity and applicability of the proposed technique.
\end{abstract}

\maketitle
\section{Introduction}
Fractional calculus is a natural extension of the integer order calculus \cite{Miller1993,Podlubny1998}. Recently, many problems in physics \cite{Hilfer2000,Sagi2012}, biology \cite{Gal2010,Magin2006}, finance \cite{Cartea2007749} and hydrology \cite{Baeumer20011543,Benson20001403,Dentz2004155} have been formulated on fractional partial differential equations, containing derivatives of fractional order in space, time or both. Fractional derivatives play a key role in modelling particle transport in anomalous diffusion. The space fractional diffusion equation describes L\'{e}vy flights \cite{Benson20001403,Liu2004209}. The time fractional diffusion equation depicts traps, and the time-space fractional diffusion equation characterizes the competition between L\'{e}vy flights and traps \cite{Zaslavsky2002461}. The regularity criterion is important for the diffusion equations that are proposed in dynamic systems. Sadek et al. \cite{Sadek20121741} established the Serrin-type regularity criteria for the 3D nematic liquid crystal flows in the terms of the multiplier space $X_{r}^{R^3}$. Sadek and Ragusa \cite{Sadek20161271} studied the regularity criterion in terms of the homogeneous Besov space for the incompressible Boussinesq equations.

Numerical methods to different types of fractional diffusion models are increasingly appearing in the sciences. Some models can be mentioned, which are groundwater flow, the flow of heat transfer in furnaces, dissolving gases in liquids and fluid flow in a porous medium. In addition, the analytical solutions of such equations are usually difficult to obtain, so in order to gain their numerical solutions becomes more important and emergent. For one-dimensional problems, Cui \cite{Cui20097792} extended a compact finite difference method for the fractional diffusion equation with the Riemann-Liouville derivative via the Gr\"{u}nwald-Letnikov discretization. Alikhanov \cite{Alikhanov2015424} constructed a widespread difference approximation of the Caputo fractional derivative for the time fractional diffusion equation with variable coefficients. Borhanifar and Valizadeh \cite{Borhanifar2015466} considered Mittag-Leffler-Pad\'{e} approximations for space and time fractional diffusion equations by using shifted Gr\"{u}nwald estimate in space, rational recurrence formula in time, and discussed their stabilities and truncation errors. Celik and Duman \cite{Celik20121743} used the fractional centered difference that introduced by Ortigueira \cite{Ortigueira20061} to solve the Riesz fractional diffusion equation and also for this type equation, some of the authors employed the matrix stemming from the discretization of the Riesz space derivative by compact difference scheme and parameter
spline function \cite{Zhang2014266} and fractional centered difference formula \cite{Popolizio20131975}. For two-dimensional problems, Bu et al. \cite{Bu201426} developed the Galerkin finite element method for the numerical study of the two-dimensional Riesz space fractional diffusion equations combined with a backward difference method. Tadjeran and Meerschaert \cite{Tadjeran2007813} applied a mixed Crank-Nicolson- ADI method not only as a discretization, but also as a Richardson extrapolation to obtain a numerical solution of the two-dimensional space fractional diffusion equation and they examined of being unconditionally stable and second order accuracy of the method. H. Wang and K. Wang \cite{Wang20117830} investigated an $O(N log^{2} N)$ alternating-direction finite difference method for the two dimensional fractional diffusion equations and at the same time, Zhang and Sun \cite{Zhang20118713} explored ADI schemes for the two-dimensional fractional sub-diffusion equation. Zeng et al. \cite{Zeng20142599} derived approximate solution via Crank-Nicolson ADI spectral method for the two-dimensional Riesz space fractional nonlinear reaction diffusion equation. Gao and Sun \cite{Gao2015487} handled high order compact ADI schemes for the 2D time-fractional advection-diffusion equation.
The numerical solutions of the two-dimensional Riesz space fractional diffusion equations have been challenging. The main purpose of this paper is to solve the two-dimensional Riesz space fractional diffusion equations using the compact difference scheme with the operator-splitting techniques, that is, using the compact ADI scheme.

Let $\Omega$ be a rectangular domain in $\emph{R}^2$ with boundary $\Gamma=\partial \Omega$ and $J=(0,T]$ be the time interval, $T > 0$. In this paper, we consider the following two dimensional Riesz space fractional diffusion equation for a solute concentration $u$

\begin{eqnarray*}\label{math201801-01}
\frac{\partial u(x,y,t)}{\partial
t}=\mathcal{C}_{x}\frac{\partial^{\alpha}u(x,y,t)}{\partial
|x|^{\alpha}}+\mathcal{C}_{y}\frac{\partial^{\beta}u(x,y,t)}{\partial
|y|^{\beta}}+s(x,y,t)
\end{eqnarray*}
\begin{eqnarray}\label{math201801-01}
(x,y,t)\in\Omega\times J,
\end{eqnarray}
\begin{eqnarray}\label{math201801-02}
u(x,y,0)=f(x,y), \quad (x,y)\in\Omega,
\end{eqnarray}
\begin{eqnarray}\label{math201801-03}
u(x,y,t)=0, \quad (x,y,t)\in \Gamma\times J,
\end{eqnarray}
where $\mathcal{C}_{x}$ and $\mathcal{C}_{y}$ are the average fluid velocities in the x- and y-directions. We restrict $1 < \alpha, \beta \leq 2$ and assume $\mathcal{C}_{x},\mathcal{C}_{y}\geq 0$. The solution $u=u(x,y,t)$ is assumed to be sufficiently smooth and has the necessary continuous partial derivatives up to certain orders.

The outline of the paper is organized as follows. Preliminaries and basic definitions are presented in the next section. Section 3 is devoted to the construction and explanation of numerical algorithm that the Crank-Nicolson scheme and the alternating directions implicit method is combined together. In Section 4, the stability and the convergence order of the numerical scheme are theoretically analyzed. One example is given in Section 5 and some conclusions are drawn in Section 6.

\section{Preliminaries and basic definitions}

\begin{defn}
The Riesz fractional operator for $n-1<\gamma\leq n$ on a finite interval $a\leq x\leq b$ is defined as \cite{Gorenflo1998167,Kilbas1993}
\begin{eqnarray}\label{math201801-04}
\begin{split}
\frac{\partial^{\gamma}v(x,t)}{\partial
|x|^{\gamma}}=-\vartheta_{\gamma}(_{a}D_{x}^{\gamma}+{}_{x}D_{b}^{\gamma})v(x,t),
\end{split}
\end{eqnarray}
where
\begin{eqnarray*}
\begin{split}
\vartheta_{\gamma}=\frac{1}{2cos(\frac{\pi\gamma}{2})}, \quad \gamma \neq1,
\end{split}
\end{eqnarray*}
\begin{eqnarray*}
\begin{split}
_{a}D_{x}^{\gamma}v(x,t)=\frac{1}{\Gamma(n-\gamma)}\frac{\partial^{n}}{\partial x^{n}}\int_{a}^{x}(x-\xi)^{n-\gamma-1}v(\xi,t)d\xi,
\end{split}
\end{eqnarray*}
\begin{eqnarray*}
\begin{split}
_{x}D_{b}^{\gamma}v(x,t)=\frac{(-1)^n}{\Gamma(n-\gamma)}\frac{\partial^{n}}{\partial x^{n}}\int_{x}^{b}(\xi-x)^{n-\gamma-1}v(\xi,t)d\xi.
\end{split}
\end{eqnarray*}
\end{defn}

\begin{lem}\label{math201801-05}
For a function $h(x)$ defined on the infinite domain $-\infty <x<\infty$, the following equality holds:
\begin{equation}\label{math201801-06}
-(-\Delta)^{\frac{\gamma}{2}}h(x)=-\frac{1}{2cos(\frac{\pi\gamma}{2})}[_{-\infty}D_{x}^{\gamma}h(x)+
{}_{x}D_{\infty}^{\gamma}h(x)]=\frac{\partial^{\gamma}}{\partial
|x|^{\gamma}}h(x).
\end{equation}
\end{lem}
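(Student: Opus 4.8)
The natural route is via the Fourier transform on $\mathbb{R}$. Write $\mathcal{F}[h](\omega)=\widehat h(\omega)=\int_{-\infty}^{\infty}e^{-i\omega x}h(x)\,dx$, and recall that the fractional Laplacian is defined spectrally by $\mathcal{F}\big[(-\Delta)^{\gamma/2}h\big](\omega)=|\omega|^{\gamma}\widehat h(\omega)$. Hence it suffices to show that the Fourier symbol of the operator $-\tfrac{1}{2\cos(\pi\gamma/2)}\big({}_{-\infty}D_x^{\gamma}+{}_{x}D_{\infty}^{\gamma}\big)$ equals $-|\omega|^{\gamma}$; the first equality in \eqref{math201801-06} then follows by applying $\mathcal{F}^{-1}$, while the second equality is simply the Definition above specialized to $a\to-\infty$, $b\to+\infty$, with $\vartheta_{\gamma}=1/(2\cos(\pi\gamma/2))$. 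Throughout, $h$ is taken smooth with enough decay (e.g. Schwartz) that all transforms below are classically defined, and $\gamma\neq1$ so that $\cos(\pi\gamma/2)\neq0$.

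First I would establish the two symbol identities
\[
\mathcal{F}\big[{}_{-\infty}D_x^{\gamma}h\big](\omega)=(i\omega)^{\gamma}\widehat h(\omega),\qquad \mathcal{F}\big[{}_{x}D_{\infty}^{\gamma}h\big](\omega)=(-i\omega)^{\gamma}\widehat h(\omega),
\]
with the principal branch of the power. For $n-1<\gamma\le n$ one writes the Riemann--Liouville derivative as the $n$-th derivative of a fractional integral, i.e. a convolution with the causal (resp.\ anti-causal) kernel $x_{\pm}^{n-\gamma-1}/\Gamma(n-\gamma)$, whose Fourier transform is $(i\omega)^{-(n-\gamma)}$ (resp.\ $(-i\omega)^{-(n-\gamma)}$); since differentiating $n$ times multiplies the symbol by $(i\omega)^{n}$ and the $(-1)^n$ in the right operator accounts for the sign, multiplying the factors gives $(i\omega)^{-(n-\gamma)}(i\omega)^{n}=(i\omega)^{\gamma}$ and $(-i\omega)^{\gamma}$ respectively.

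Next, using $i=e^{i\pi/2}$, $-i=e^{-i\pi/2}$ and $\omega=|\omega|\,\mathrm{sgn}(\omega)$ one has $(\pm i\omega)^{\gamma}=|\omega|^{\gamma}e^{\pm i(\pi\gamma/2)\mathrm{sgn}(\omega)}$, whence
\[
(i\omega)^{\gamma}+(-i\omega)^{\gamma}=|\omega|^{\gamma}\Big(e^{i(\pi\gamma/2)\mathrm{sgn}(\omega)}+e^{-i(\pi\gamma/2)\mathrm{sgn}(\omega)}\Big)=2\cos\!\Big(\tfrac{\pi\gamma}{2}\Big)|\omega|^{\gamma}.
\]
Dividing by $-2\cos(\pi\gamma/2)$ shows the combined symbol is exactly $-|\omega|^{\gamma}=-\mathcal{F}[(-\Delta)^{\gamma/2}h](\omega)$, and inverting the transform gives $-(-\Delta)^{\gamma/2}h(x)=-\tfrac{1}{2\cos(\pi\gamma/2)}\big[{}_{-\infty}D_x^{\gamma}h(x)+{}_{x}D_{\infty}^{\gamma}h(x)\big]$, which together with the Definition is the claim.

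The only genuinely technical step is the rigorous derivation of the one-sided symbols: one must justify interchanging the $n$-fold differentiation with the Fourier integral, make sense of the Fourier transform of the non-absolutely-integrable kernel $x_{\pm}^{n-\gamma-1}$ (via regularization, or by quoting the standard distributional transform of $x_{+}^{\lambda}$), and fix the branch of $w\mapsto w^{\gamma}$ consistently for $w=i\omega$ and $w=-i\omega$ so that the two contributions add to the real symbol $2\cos(\pi\gamma/2)|\omega|^{\gamma}$. Once this is in hand, the remainder is the elementary trigonometric identity displayed above.
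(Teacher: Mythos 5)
Your proposal is correct: the Fourier-symbol computation $\mathcal{F}[{}_{-\infty}D_x^{\gamma}h]=(i\omega)^{\gamma}\widehat h$, $\mathcal{F}[{}_{x}D_{\infty}^{\gamma}h]=(-i\omega)^{\gamma}\widehat h$, together with $(i\omega)^{\gamma}+(-i\omega)^{\gamma}=2\cos(\pi\gamma/2)|\omega|^{\gamma}$, is the standard way to identify $-\frac{1}{2\cos(\pi\gamma/2)}({}_{-\infty}D_x^{\gamma}+{}_{x}D_{\infty}^{\gamma})$ with $-(-\Delta)^{\gamma/2}$, and your handling of the $(-1)^{n}$ factor and the branch of the power is right. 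The paper itself offers no argument for this lemma, only the citation to Yang, Liu and Turner, where essentially this same Fourier-transform derivation is carried out, so your proof takes the same route as the source the authors rely on.
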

\begin{proof}
See Ref. \cite{Yang2010200}
\end{proof}

\begin{defn}\label{math201801-07}
\cite{Ding20121135} Let the Laplacian $(-\Delta)$ has a complete set of orthonormal eigenfunctions $\varphi_{n}$ corresponding to
eigenvalues $\lambda_{n}^{2}$ on a bounded region $\Omega$ with the homogeneous boundary conditions, then
\begin{center}
 $(-\Delta)^{\frac{\gamma}{2}}f=\left\{%
\begin{array}{ll}
    (-\Delta)^{m}f,   & \gamma=2m,\quad m=0,1,2,..., \\
    (-\Delta)^{\frac{\gamma}{2}-m}(-\Delta)^{m}f,   & m-1<\frac{\gamma}{2}<m,\quad m=1,2,..., \\
    \sum_{n=1}^{\infty}\lambda_{n}^{\gamma}\langle f,\varphi_{n}\rangle \varphi_{n},   & \gamma<0. \\
\end{array}%
\right.$\end{center}
\end{defn}

\begin{lem}\label{math201801-08}
\cite{Meyer2004} The eigenvalues and eigenvectors of the following tridiagonal Toeplitz matrix
\begin{equation*}
A=\left(
\begin{array}{ccccc}
  b & a &   &   &   \\
  c & b & a &   &   \\
    & \ddots & \ddots & \ddots &  \\
   &   & c & b & a   \\
  &   &   & c & b
\end{array}
\right)_{n\times n}
\end{equation*}

are given by

\begin{equation}\label{math201801-09}
\lambda_{j}=b+2a\sqrt{c/a}\cos(j\pi/(n+1)), \quad j=1,2,...,n,
\end{equation}
while the corresponding eigenvectors are:
\begin{equation*}
x_{j}=\left(
\begin{array}{c}
  (c/a)^{1/2}\sin(1j\pi/(n+1)) \\
  (c/a)^{2/2}\sin(2j\pi/(n+1)) \\
  (c/a)^{3/2}\sin(3j\pi/(n+1)) \\
  \vdots \\
  (c/a)^{n/2}\sin(nj\pi/(n+1)) \\
\end{array}
\right), \quad j=1,2,...,n,
\end{equation*}
i.e., $A x_{j}=\lambda_{j} x_{j}$, $j=1,2,...,n$. Moreover, the matrix $A$ is diagonalizable and $P=(x_{1}\quad x_{2}\quad ...\quad x_{n})$ diagonalizes $A$,
i.e., $P^{-1}AP=D$, where $D=diag(\lambda_{1}\quad \lambda_{2}\quad ...\quad \lambda_{n})$.
\end{lem}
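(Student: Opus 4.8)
The plan is to reduce the general tridiagonal Toeplitz matrix to a \emph{symmetric} one by a diagonal similarity, and then to solve the resulting symmetric eigenproblem explicitly via a second–order linear recurrence with a sine ansatz. Assume $a\neq 0$ and $c\neq 0$ (otherwise $A$ is triangular, its only eigenvalue is $b$, and the stated formulas degenerate harmlessly). Put $t=\sqrt{c/a}$ and $D=\mathrm{diag}(t,t^2,\dots,t^n)$. A direct computation of entries gives $B:=D^{-1}AD=DBD^{-1}$-conjugate\dots more precisely $B:=D^{-1}AD$ is again tridiagonal Toeplitz, with diagonal entry $b$ and both off–diagonal entries equal to $at=c/t=\sqrt{ac}$. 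Since $A=DBD^{-1}$, it suffices to find the eigenpairs of $B$; if $Bv=\lambda v$ then $A(Dv)=\lambda(Dv)$, and the $k$-th component of $Dv$ carries exactly the factor $t^k=(c/a)^{k/2}$ appearing in the stated eigenvector $x_j$.

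For the symmetric matrix $B$ I would write $Bx=\lambda x$ componentwise and adopt the convention $x_0=x_{n+1}=0$; then the first and last rows of the system become instances of the same three–term recurrence as the interior rows, namely $\sqrt{ac}\,x_{k-1}+b\,x_k+\sqrt{ac}\,x_{k+1}=\lambda x_k$ for $k=1,\dots,n$. Substituting $x_k=\mu^k$ yields the characteristic equation $\sqrt{ac}\,\mu^2+(b-\lambda)\mu+\sqrt{ac}=0$, whose roots satisfy $\mu_1\mu_2=1$ and $\sqrt{ac}(\mu_1+\mu_2)=\lambda-b$. The condition $x_0=0$ restricts the general solution to $x_k=C(\mu_1^k-\mu_2^k)$, and $x_{n+1}=0$ forces $(\mu_1/\mu_2)^{n+1}=1$. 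Writing $\mu_1=e^{i\theta}$, $\mu_2=e^{-i\theta}$ (legitimate since $\mu_1\mu_2=1$), this becomes $\theta=j\pi/(n+1)$ with $j=1,\dots,n$ (the choices $j=0$ and $j=n+1$ give the zero vector). Hence $x_k\propto\sin(kj\pi/(n+1))$ and $\lambda=b+\sqrt{ac}(\mu_1+\mu_2)=b+2\sqrt{ac}\cos(j\pi/(n+1))=b+2a\sqrt{c/a}\cos(j\pi/(n+1))$, which is \eqref{math201801-09}; undoing the conjugation by $D$ reproduces the claimed eigenvectors of $A$.

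Finally I would check that these $n$ pairs constitute a full eigendecomposition: the numbers $\cos(j\pi/(n+1))$, $j=1,\dots,n$, are pairwise distinct, so $A$ has $n$ distinct eigenvalues, is therefore diagonalizable, and $P=(x_1\ \cdots\ x_n)$ is invertible with $P^{-1}AP=\mathrm{diag}(\lambda_1,\dots,\lambda_n)$. One should also dispatch the double–root case $\mu_1=\mu_2$: the general solution is then $(C_1+C_2k)\mu^k$, and $x_0=x_{n+1}=0$ forces $C_1=C_2=0$, so restricting to distinct roots loses no eigenvector. The only point that genuinely needs care — the main obstacle — is the treatment of the first and last rows: one must verify that the fictitious zeros $x_0=x_{n+1}=0$ make the boundary equations coincide with the interior recurrence, so that a single closed-form ansatz solves the entire system simultaneously; everything else is routine trigonometric bookkeeping and the diagonal rescaling between $A$ and $B$.
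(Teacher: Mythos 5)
Your proof is correct. Note, however, that the paper does not prove this lemma at all: it is stated with a bare citation to Meyer's \emph{Matrix Analysis and Applied Linear Algebra}, so there is no internal argument to compare against. What you have written is essentially the standard derivation found in that reference — symmetrize $A$ by the diagonal similarity $D=\mathrm{diag}(t,t^2,\dots,t^n)$ with $t=\sqrt{c/a}$, solve the resulting three-term recurrence with the ansatz $x_k=\mu^k$ under the fictitious boundary values $x_0=x_{n+1}=0$, use $\mu_1\mu_2=1$ to force $\mu_{1,2}=e^{\pm i j\pi/(n+1)}$, and read off $\lambda_j=b+2\sqrt{ac}\cos\bigl(j\pi/(n+1)\bigr)$ with sine eigenvectors, the factor $t^k=(c/a)^{k/2}$ reappearing when the conjugation is undone; distinctness of the $n$ values $\cos\bigl(j\pi/(n+1)\bigr)$ then gives diagonalizability and the invertibility of $P$. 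Your handling of the double-root case and of the boundary rows is exactly the care the argument needs.

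One small quibble: your parenthetical claim that the case $a=0$ or $c=0$ ``degenerates harmlessly'' is not quite right. If exactly one of $a,c$ vanishes and $n\ge 2$, then $A$ is a bidiagonal Toeplitz matrix with a single eigenvalue $b$ of geometric multiplicity one, hence \emph{not} diagonalizable, and the stated eigenvector formula collapses to the zero vector; so the lemma as stated implicitly requires $ac\neq 0$ (and, for the real form of the formulas, a consistent choice of the square root when $ac<0$). This does not affect the paper's use of the lemma, since there it is applied to $\mathrm{diag}(1,10,1)$ and $\mathrm{diag}(1,-2,1)$, where $a=c=1$.
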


\begin{defn}\label{math201801-10}
\cite{Hunter2004} Let $f,g:\mathbb{R}-\{0\} \rightarrow \mathbb{R}$ be real functions. We say $f=\mathcal{O}(g)$ as $x \rightarrow 0$ if there are constants $C$ and $r>0$ such that
\begin{equation*}
\mid f(x)\mid\leq C \mid g(x)\mid \quad \mbox{whenever} \quad 0<\mid x \mid < r.
\end{equation*}
\end{defn}
also following properties of asymptotic estimates are hold for "$\mathcal{O}$" \cite{Erdelyi1956}:
\begin{equation*}
\mathcal{O}(f(x))+\mathcal{O}(f(x))=\mathcal{O}(f(x)),
\end{equation*}
\begin{equation*}
\mathcal{O}(f(x))\mathcal{O}(g(x))=\mathcal{O}(f(x)g(x)).
\end{equation*}

\begin{lem}\label{math201801-11}
If $g(x)$ be a smooth function on $\mathbb{R}$ that is discretized in a finite interval $[a,b]$ include $n$ nodal points as $x_{i}=a+i\mathit{h}$ in which $\mathit{h}=\frac{b-a}{n}$ then $\frac{1}{\mathit{h}^2}\frac{\delta_{x}^2}{1+\frac{\delta_{x}^2}{12}}$  operator approximates the second derivative of the $g(x)$ from the fourth order at inner nodal points of $[a,b]$.
\end{lem}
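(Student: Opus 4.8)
The plan is to reduce the claim to an identity between the symbols of the operators involved, and then to upgrade the formal identity to an honest estimate at the grid points. Writing $D=d/dx$ and applying Taylor's theorem (legitimate because $g$ is smooth, so all derivatives used are bounded on $[a,b]$), the second central difference satisfies, at any interior node $x_i$,
\[
\delta_x^2 g(x_i)=g(x_{i+1})-2g(x_i)+g(x_{i-1})=h^2 g''(x_i)+\tfrac{h^4}{12}g^{(4)}(x_i)+\tfrac{h^6}{360}g^{(6)}(x_i)+\mathcal{O}(h^8).
\]
Thus, formally, $\delta_x^2=h^2D^2\bigl(1+\tfrac{h^2}{12}D^2+\tfrac{h^4}{360}D^4+\cdots\bigr)$ while $1+\tfrac{1}{12}\delta_x^2=1+\tfrac{h^2}{12}D^2+\tfrac{h^4}{144}D^4+\cdots$. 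Dividing, the $\mathcal{O}(1)$ and $\mathcal{O}(h^2)$ parts of numerator and denominator coincide, so $\frac{1}{h^2}\frac{\delta_x^2}{1+\delta_x^2/12}=D^2+\mathcal{O}(h^4)$; this already exhibits the fourth-order accuracy and identifies the leading truncation term as $\bigl(\tfrac{1}{360}-\tfrac{1}{144}\bigr)h^4 g^{(6)}$.

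To make this rigorous at the nodal points one must recall that $\bigl(1+\tfrac{1}{12}\delta_x^2\bigr)^{-1}$ is \emph{not} a local operator: it is the inverse of the tridiagonal matrix $M=\tfrac{1}{12}\,\mathrm{tridiag}(1,10,1)$ acting on the vector of interior grid values. I would therefore define $w$ to be the grid function solving $\bigl(1+\tfrac{1}{12}\delta_x^2\bigr)w_i=\tfrac{1}{h^2}\delta_x^2 g(x_i)$ at the interior nodes (with the known values of $g$ used at the two boundary nodes) and prove $w_i=g''(x_i)+\mathcal{O}(h^4)$. Applying $M$ to the exact values $g''(x_i)$ and Taylor expanding gives $\bigl(1+\tfrac{1}{12}\delta_x^2\bigr)g''(x_i)=g''(x_i)+\tfrac{h^2}{12}g^{(4)}(x_i)+\tfrac{h^4}{144}g^{(6)}(x_i)+\mathcal{O}(h^6)$, whereas $\tfrac{1}{h^2}\delta_x^2 g(x_i)=g''(x_i)+\tfrac{h^2}{12}g^{(4)}(x_i)+\tfrac{h^4}{360}g^{(6)}(x_i)+\mathcal{O}(h^6)$. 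Subtracting, the first two terms cancel and one gets the consistency estimate $M\,(w-g'')=\mathcal{O}(h^4)$ componentwise, uniformly in $h$.

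It remains to invoke stability. By Lemma~\ref{math201801-08} (with $a=c=1$, $b=10$, and the overall factor $\tfrac1{12}$), the eigenvalues of $M$ are $\tfrac{1}{12}\bigl(10+2\cos(j\pi/n)\bigr)$ and hence lie in $[\tfrac23,1]$, so $M$ is symmetric positive definite with $\|M^{-1}\|_2\le\tfrac32$ independently of $h$ and $n$. Consequently $w-g''=M^{-1}\bigl(\mathcal{O}(h^4)\bigr)=\mathcal{O}(h^4)$ in the discrete norm, and in particular at every interior node, which is the assertion of the lemma. The only genuinely delicate point — everything else being routine Taylor expansion — is precisely this non-locality: one must pass from the consistency bound $M(w-g'')=\mathcal{O}(h^4)$ to the error bound $w-g''=\mathcal{O}(h^4)$ through a uniform bound on $M^{-1}$, and one must phrase the statement only at interior nodes, where $\delta_x^2 g$ is defined.
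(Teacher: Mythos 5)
Your Taylor-expansion core is exactly the paper's argument: the paper also sets $f=g''$, expands $\frac{1}{h^2}\delta_x^2 g(x_i)$ and $(1+\frac{1}{12}\delta_x^2)g''(x_i)$ to sixth order, cancels the $h^2$ terms, and obtains the residual $-\frac{1}{240}g^{(6)}(x_i)h^4+\mathcal{O}(h^6)$, the same leading coefficient you computed. Where you genuinely diverge is in the final step: the paper stops at this consistency identity and formally divides by $1+\frac{1}{12}\delta_x^2$ to assert $g''(x_i)=\frac{1}{h^2}\frac{\delta_x^2}{1+\delta_x^2/12}g(x_i)+\mathcal{O}(h^4)$, whereas you observe that this operator is non-local (the inverse of the tridiagonal matrix $M=\frac{1}{12}\mathrm{tridiag}(1,10,1)$ acting on all interior nodes) and therefore add a stability argument, bounding $M^{-1}$ uniformly via the eigenvalue formula of Lemma \ref{math201801-08}. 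That extra step is precisely what the paper's proof glosses over, and it is consistent with what the paper later does in Appendix A, so your version buys rigor at the argument's weakest point. One small repair is needed: you establish the residual bound componentwise and then apply $\|M^{-1}\|_2\le\frac{3}{2}$, but converting a componentwise bound into the Euclidean norm costs a factor $\sqrt{n}\sim h^{-1/2}$, so as written the nodal estimate degrades to $\mathcal{O}(h^{7/2})$; instead work in the maximum norm and use strict diagonal dominance of $M$ (diagonal $\frac{10}{12}$ versus off-diagonal row sum at most $\frac{2}{12}$ gives $\|M^{-1}\|_\infty\le\frac{3}{2}$), which delivers the full pointwise fourth-order accuracy claimed by the lemma.
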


\begin{proof}
According to being smooth the function $g(x)$, there is continuous function  $f(x)$ that
\begin{equation*}
g''(x)=\frac{d^{2}g(x)}{dx^{2}}=f(x).
\end{equation*}
Alternatively, we can write, $g (x) $ is an exact solution of the above differential equation.
To prove the lemma, needs to show the following relationship is confirmed in the internal nodes $x_{i}$, $i=1,2,...n-1$.
\begin{equation*}
\frac{1}{\mathit{h}^2}\delta_{x}^2g(x_{i})-(1+\frac{\delta_{x}^2}{12})f(x_{i})=\mathcal{O}(\mathit{h}^4),
\end{equation*}
we apply the relevant operators on $g(x_{i})$ and then $f(x_{i})$
\begin{equation}\label{math201801-12}
\frac{1}{\mathit{h}^2}\delta_{x}^2g(x_{i})=\frac{g(x_{i}+h)-2g(x_{i})+g(x_{i}-h)}{\mathit{h}^2},
\end{equation}
\begin{align}\label{math201801-13}
(1+\frac{\delta_{x}^2}{12})f(x_{i})&=(1+\frac{\delta_{x}^2}{12})g''(x_{i})\\
&=g''(x_{i})+\frac{g''(x_{i}+h)-2g''(x_{i})+g''(x_{i}-h)}{12}.
\end{align}
By substituting the Taylor series of the function $g(x_{i}+h)$ and $g(x_{i}-h)$ about $x=x_{i}$ in the formula (\ref{math201801-12}) and using the average value theorem for derivatives, we have
\begin{equation*}\label{math201801-14}
\frac{1}{\mathit{h}^{2}}\delta_{x}^2g(x_{i})=g''(x_{i})+\frac{g^{(4)}(x_{i})}{12}\mathit{h}^{2}
+\frac{g^{(6)}(x_{i})}{360}\mathit{h}^{4}+\frac{g^{(8)}(\xi)}{20160}\mathit{h}^{6},
\end{equation*}
\begin{equation}\label{math201801-14}
x_{i}-h<\xi<x_{i}+h.
\end{equation}
Similarly, $g''(x_{i}+h)$ and $g''(x_{i}-h)$ in the formula (\ref{math201801-13}) replace with Taylor expansion theirs centered at $x=x_{i}$ and applying the average value theorem for derivatives, we have
\begin{equation*}\label{math201801-15}
(1+\frac{\delta_{x}^2}{12})f(x_{i})=g''(x_{i})+\frac{1}{12}[g^{(4)}(x_{i})\mathit{h}^{2}+
\frac{g^{(6)}(x_{i})}{12}\mathit{h}^{4}+\frac{g^{(8)}(\zeta)}{360}\mathit{h}^{6}],
\end{equation*}
\begin{equation}\label{math201801-15}
x_{i}-h<\zeta<x_{i}+h.
\end{equation}
The result is following equation by subtracting formula (\ref{math201801-15}) from formula (\ref{math201801-14}), utilized the average value theorem
\begin{equation*}
\frac{1}{\mathit{h}^2}\delta_{x}^2g(x_{i})-(1+\frac{\delta_{x}^2}{12})f(x_{i})=
-\frac{1}{240}g^{(6)}(x_{i})\mathit{h}^{4}-\frac{1}{60480}g^{(8)}(\eta)\mathit{h}^{6},
\end{equation*}
\begin{equation*}
x_{i}-h<\eta<x_{i}+h,
\end{equation*}
therefore
\begin{equation*}
g''(x_{i})=\frac{1}{\mathit{h}^2}\frac{\delta_{x}^2}{1+\frac{\delta_{x}^2}{12}}g(x_{i})+\mathcal{O}(\mathit{h}^4).
\end{equation*}
\end{proof}

\section{Derivation of compact ADI scheme}

In this section, we develop a compact ADI finite difference scheme for the problem (\ref{math201801-01})--(\ref{math201801-03}). Let $\mathit{h}_{x} = \frac{R_{1}-L_{1}}{M_{1}}$, $\mathit{h}_{y} = \frac{R_{2}-L_{2}}{M_{2}}$, and $\mathit{k}_{t} = \frac{T}{N}$ be the spatial and temporal step sizes respectively, where $M_{1}$, $M_{2}$ and $N$ are some given positive integers. Denote $x_{i}=L_{1}+i\mathit{h}_{x}$, $y_{j}=L_{2}+j\mathit{h}_{y}$, $t_{n}=n\mathit{k}_{t}$ for $i = 0, 1, . . . ,M_{1}$, $j = 0, 1, . . . ,M_{2}$ and $n = 0, 1, . . . , N$. We let $u(x_{i},y_{j},t_{n})$ be the exact solution of (\ref{math201801-01})--(\ref{math201801-03}) at the mesh point $(x_{i},y_{j},t_{n})$ and $u_{i,j}^{n}$ represents the solution of an approximating difference scheme at the same mesh point.

Based on Lemma \ref{math201801-05} the Riesz fractional derivative $\frac{\partial^{\gamma}}{\partial|x|^{\gamma}}h(x)$ and the fractional Laplacian operator $-(-\Delta)^{\frac{\gamma}{2}}h(x)$ are equivalent. Thus the two-dimensional Riesz space fractional diffusion equation (\ref{math201801-01}) is in the following form
\begin{equation}\label{math201801-18}
\frac{\partial u(x,y,t)}{\partial
t}=-[\mathcal{C}_{x}(-\Delta_{x})^{\frac{\alpha}{2}}+\mathcal{C}_{y}(-\Delta_{y})^{\frac{\beta}{2}}]u(x,y,t)+s(x,y,t).
\end{equation}
The next stage is to translate each of fractional Riesz derivatives into their corresponding fractional operators at the point $(x_{i},y_{j},t)$. From (\ref{math201801-16}) and (\ref{math201801-17}) in \emph{Appendix A} we have
\begin{equation}\label{math201801-19}
((-\Delta_{x})^{\frac{\alpha}{2}}u)_{i,j}\simeq(-\frac{1}{\mathit{h}_{x}^2}\frac{\delta_{x}^2}{1+\frac{\delta_{x}^2}{12}})^{\frac{\alpha}{2}}u_{i,j},
\end{equation}
and
\begin{equation}\label{math201801-20}
((-\Delta_{y})^{\frac{\beta}{2}}u)_{i,j}\simeq(-\frac{1}{\mathit{h}_{y}^2}\frac{\delta_{y}^2}{1+\frac{\delta_{y}^2}{12}})^{\frac{\beta}{2}}u_{i,j}.
\end{equation}
Substituting (\ref{math201801-19})-(\ref{math201801-20}) into (\ref{math201801-18}) yields
\begin{equation}\label{math201801-21}
\frac{\partial u_{i,j}^{n}}{\partial
t}=-[\mathcal{C}_{x}\mathcal{D}_{\alpha,x}+\mathcal{C}_{y}\mathcal{D}_{\beta,y}]u_{i,j}^{n}+s_{i,j}^{n},
\end{equation}
in which
\begin{equation*}
(-\frac{1}{\mathit{h}_{x}^2}\frac{\delta_{x}^2}{1+\frac{\delta_{x}^2}{12}})^{\frac{\alpha}{2}}=\mathcal{D}_{\alpha,x}, \quad (-\frac{1}{\mathit{h}_{y}^2}\frac{\delta_{y}^2}{1+\frac{\delta_{y}^2}{12}})^{\frac{\beta}{2}}=\mathcal{D}_{\beta,y}.
\end{equation*}
Finally, temporal discretization by Crank-Nicolson method for (\ref{math201801-21}) results in
\begin{equation}\label{math201801-22}
\frac{u_{i,j}^{n+1}-u_{i,j}^{n}}{\mathit{k}_{t}}=-[\mathcal{C}_{x}\mathcal{D}_{\alpha,x}+\mathcal{C}_{y}\mathcal{D}_{\beta,y}]
\frac{u_{i,j}^{n}+u_{i,j}^{n+1}}{2}+\frac{s_{i,j}^{n}+s_{i,j}^{n+1}}{2}.
\end{equation}
After rearrangement and multiplying (\ref{math201801-22}) by $\mathit{k}_{t}$, we have
\begin{equation*}\label{math201801-222}
[1+\frac{\mathit{k}_{t}}{2}(\mathcal{C}_{x}\mathcal{D}_{\alpha,x}+\mathcal{C}_{y}\mathcal{D}_{\beta,y})]u_{i,j}^{n+1}=
[1-\frac{\mathit{k}_{t}}{2}(\mathcal{C}_{x}\mathcal{D}_{\alpha,x}+\mathcal{C}_{y}\mathcal{D}_{\beta,y})]u_{i,j}^{n}
\end{equation*}
\begin{equation}\label{math201801-222}
+\frac{\mathit{k}_{t}}{2}(s_{i,j}^{n}+s_{i,j}^{n+1}).
\end{equation}
We note that the compact finite difference method (\ref{math201801-222}) can be rewritten as the following directional splitting factorization form \cite{Deng2014371}
\begin{equation*}\label{math201801-23}
[1+\frac{\mathit{k}_{t}}{2}\mathcal{C}_{x}\mathcal{D}_{\alpha,x}][1+\frac{\mathit{k}_{t}}{2}
\mathcal{C}_{y}\mathcal{D}_{\beta,y}]u_{i,j}^{n+1}=[1-\frac{\mathit{k}_{t}}{2}\mathcal{C}_{x}\mathcal{D}_{\alpha,x}][1-\frac{\mathit{k}_{t}}{2}
\mathcal{C}_{y}\mathcal{D}_{\beta,y}]u_{i,j}^{n}
\end{equation*}
\begin{equation}\label{math201801-23}
+\frac{\mathit{k}_{t}}{2}(s_{i,j}^{n}+s_{i,j}^{n+1}),
\end{equation}
which introduces an additional perturbation error equal to $\frac{\mathit{k}_{t}}{4}\mathcal{D}_{\alpha,x}\mathcal{D}_{\beta,y}(u_{i,j}^{n+1}-u_{i,j}^{n})$.\\
The additional term is of higher order and do not affect the accuracy of the scheme. In order to simplify the computation, we may re-write the scheme (\ref{math201801-23}) in the Peaceman-Rachford ADI form \cite{Peaceman195541} as
\begin{equation}\label{math201801-24}
[1+\frac{\mathit{k}_{t}}{2}\mathcal{C}_{x}\mathcal{D}_{\alpha,x}]u_{i,j}^{*}=[1-\frac{\mathit{k}_{t}}{2}
\mathcal{C}_{y}\mathcal{D}_{\beta,y}]u_{i,j}^{n}+\frac{\mathit{k}_{t}}{2}(s_{i,j}^{n}+s_{i,j}^{n+1}),
\end{equation}
\begin{equation}\label{math201801-25}
[1+\frac{\mathit{k}_{t}}{2}\mathcal{C}_{y}\mathcal{D}_{\beta,y}]u_{i,j}^{n+1}=[1-\frac{\mathit{k}_{t}}{2}
\mathcal{C}_{x}\mathcal{D}_{\alpha,x}]u_{i,j}^{*}+\frac{\mathit{k}_{t}}{2}(s_{i,j}^{n}+s_{i,j}^{n+1}),
\end{equation}
where $u_{i,j}^{*}$ is an intermediate value.\\
The corresponding algorithm is employed as follows:

(1) First solve on each fixed horizontal slice $y=y_{k}$ $(k=1,2,...,M_{2}-1)$, a set of $M_{1}-1$ equations at the
points $x_{i}$, $i=1,2,...,M_{1}-1$ defined by (\ref{math201801-24}) to obtain the middle solution slice $u_{i,k}^{*}$.

(2) Next alternating the spatial direction, and for each $x = x_{k}$ $(k=1,2,...,M_{1}-1)$
solving a set of $M_{2}-1$ equations defined by (\ref{math201801-25}) at the points $y_{j}$, $j = 1,2,...,M_{2}-1$,
to get $u_{i,j}^{n+1}$.

\section{Stability and convergence analysis}

In this section, we prove consistency and stability for the compact difference scheme (\ref{math201801-23}).
\begin{thm}\label{math201801-26}
The compact difference scheme (\ref{math201801-23}) is unconditionally stable.
\end{thm}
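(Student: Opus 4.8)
The plan is to put the factored scheme (\ref{math201801-23}) into matrix form and to show that the resulting one-step transition matrix is non-amplifying in the Euclidean norm, independently of $h_x,h_y,k_t$. First I would order the interior unknowns $u_{i,j}^{n}$, $1\le i\le M_1-1$, $1\le j\le M_2-1$, into a vector $U^{n}$. In one space dimension, $\delta_x^2$ is represented by the symmetric tridiagonal Toeplitz matrix $B_x$ of order $M_1-1$ with $-2$ on the diagonal and $1$ on the off-diagonals (and likewise $B_y$), so $1+\delta_x^2/12$ becomes $I+\tfrac1{12}B_x$, which commutes with $B_x$; hence the compact operator $-\tfrac1{h_x^2}\tfrac{\delta_x^2}{1+\delta_x^2/12}$ is the symmetric matrix $M_x=-\tfrac1{h_x^2}\bigl(I+\tfrac1{12}B_x\bigr)^{-1}B_x$, sharing the eigenvectors of $B_x$. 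By Lemma~\ref{math201801-08} the eigenvalues of $B_x$ are $\mu_j=-2+2\cos(j\pi/M_1)=-4\sin^2\!\bigl(j\pi/(2M_1)\bigr)\in(-4,0)$; therefore $I+\tfrac1{12}B_x$ has eigenvalues in $(2/3,1)$ (all positive) and $M_x$ has eigenvalues $\lambda_j^{(x)}=-\tfrac1{h_x^2}\tfrac{\mu_j}{1+\mu_j/12}>0$. Thus $M_x$ is symmetric positive definite, the fractional power $\mathcal{D}_{\alpha,x}=M_x^{\alpha/2}$ is a well-defined symmetric positive definite matrix with eigenvalues $d_j:=(\lambda_j^{(x)})^{\alpha/2}>0$, and the same argument gives $\mathcal{D}_{\beta,y}=M_y^{\beta/2}$ symmetric positive definite with eigenvalues $e_k>0$, $k=1,\dots,M_2-1$.

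Next, on the two-dimensional grid the global operators are $D_x=M_x^{\alpha/2}\otimes I_{M_2-1}$ and $D_y=I_{M_1-1}\otimes M_y^{\beta/2}$; these are symmetric and commute, so an orthogonal matrix $Q$ diagonalizes them simultaneously. In these variables the scheme (\ref{math201801-23}) reads
\begin{equation*}
\bigl(I+\tfrac{k_t}{2}\mathcal{C}_xD_x\bigr)\bigl(I+\tfrac{k_t}{2}\mathcal{C}_yD_y\bigr)U^{n+1}=\bigl(I-\tfrac{k_t}{2}\mathcal{C}_xD_x\bigr)\bigl(I-\tfrac{k_t}{2}\mathcal{C}_yD_y\bigr)U^{n}+\tfrac{k_t}{2}\bigl(S^n+S^{n+1}\bigr),
\end{equation*}
and the left factor is invertible because all of its eigenvalues $\bigl(1+\tfrac{k_t}{2}\mathcal{C}_xd_j\bigr)\bigl(1+\tfrac{k_t}{2}\mathcal{C}_ye_k\bigr)$ are $\ge 1$. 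Hence $U^{n+1}=G\,U^{n}+F^{n}$, where
\begin{equation*}
G=\bigl(I+\tfrac{k_t}{2}\mathcal{C}_xD_x\bigr)^{-1}\bigl(I+\tfrac{k_t}{2}\mathcal{C}_yD_y\bigr)^{-1}\bigl(I-\tfrac{k_t}{2}\mathcal{C}_xD_x\bigr)\bigl(I-\tfrac{k_t}{2}\mathcal{C}_yD_y\bigr)
\end{equation*}
is a product of four pairwise commuting symmetric matrices, hence symmetric, and $Q^{T}GQ$ is diagonal with entries
\begin{equation*}
g_{j,k}=\frac{\bigl(1-\tfrac{k_t}{2}\mathcal{C}_xd_j\bigr)\bigl(1-\tfrac{k_t}{2}\mathcal{C}_ye_k\bigr)}{\bigl(1+\tfrac{k_t}{2}\mathcal{C}_xd_j\bigr)\bigl(1+\tfrac{k_t}{2}\mathcal{C}_ye_k\bigr)},\qquad 1\le j\le M_1-1,\ 1\le k\le M_2-1.
\end{equation*}

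Finally I would invoke the elementary bound $\bigl|\tfrac{1-a}{1+a}\bigr|\le 1$ valid for every $a\ge 0$: since $\mathcal{C}_x,\mathcal{C}_y\ge 0$, $k_t>0$ and $d_j,e_k>0$, both $\tfrac{k_t}{2}\mathcal{C}_xd_j$ and $\tfrac{k_t}{2}\mathcal{C}_ye_k$ are nonnegative, so $|g_{j,k}|\le 1$ for all $j,k$ and for every mesh. Because $G$ is symmetric, $\|G\|_2=\rho(G)=\max_{j,k}|g_{j,k}|\le 1$, whence $\|G^{n}\|_2\le 1$ for all $n$ and the scheme (\ref{math201801-23}) is unconditionally stable (the bounded forcing $F^{n}$ does not affect this conclusion). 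I expect the only genuinely delicate step to be the first one --- checking that $M_x^{\alpha/2}$ and $M_y^{\beta/2}$ are legitimate symmetric positive definite matrices with the stated spectra --- and this is furnished precisely by the explicit diagonalization of tridiagonal Toeplitz matrices in Lemma~\ref{math201801-08}, which also shows $D_x$ and $D_y$ share a common orthogonal eigenbasis; once that is in hand the rest is the routine M\"obius-type estimate above.
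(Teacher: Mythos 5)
Your proof is correct and follows essentially the same route as the paper: both arguments diagonalize the compact spatial operators via the tridiagonal Toeplitz eigenvalue result (Lemma~\ref{math201801-08}), deduce that the fractional-power matrices (the paper's $S_{x}$ and $T_{y}$ in Appendices A and B) are symmetric with positive eigenvalues, and then use the bound $\left|\frac{1-a}{1+a}\right|\le 1$ for $a\ge 0$ together with symmetry, so that the $2$-norm of the amplification operator equals its spectral radius and is at most one. The only cosmetic difference is that you bound the full two-dimensional amplification matrix $G$ in one step, whereas the paper bounds the two Kronecker factors $(I+S_{x})^{-1}(I-S_{x})$ and $(I+T_{y})^{-1}(I-T_{y})$ separately.
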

\begin{proof}
To prove the stability of the difference scheme (\ref{math201801-23}), we examine the matrix $(I+S_{x})^{-1}(I-S_{x})\otimes(I+T_{y})^{-1}(I-T_{y})$ that stands as the tensor operator in formula (\ref{math201801-23}).\\
\emph{Appendices A and B}, show that the eigenvalues of matrices $S_{x}$ and $T_{y}$ are positive.
Therefore all the eigenvalues of the matrices  $(I+S_{x})$ and $(I+T_{y})$ are greater than one, and thus this matrices are invertible.
Positivity of eigenvalues of the matrix $S_{x}$ and $T_{y}$ result that every eigenvalue of the matrices of $(I+S_{x})^{-1}(I-S_{x})$ and $(I+T_{y})^{-1}(I-T_{y})$ have the modulates less than one. Therefore, the spectral radius of the matrices $(I+S_{x})^{-1}(I-S_{x})$ and $(I+T_{y})^{-1}(I-T_{y})$ are less than one. $(I+S_{x})^{-1}(I-S_{x})$ and $(I+T_{y})^{-1}(I-T_{y})$ are real and symmetric due to symmetricity of $S_{x}$ and $T_{y}$ (see \emph{Appendix B}). So the norm of the matrices of $(I+S_{x})^{-1}(I-S_{x})$ and $(I+T_{y})^{-1}(I-T_{y})$ are less than one.\\
Hence, the difference scheme (\ref{math201801-23}) is unconditionally stable.
\end{proof}

\begin{thm}\label{math201801-29}
The truncation error of the difference scheme (\ref{math201801-23}) is  $\mathcal{O}(\mathit{h}_{x}^{2\alpha})+\mathcal{O}(h_{y}^{2\beta})+\mathcal{O}(\mathit{k}_{t}^2)$.
\end{thm}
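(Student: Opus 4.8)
The plan is to estimate, term by term, the local truncation error obtained when the exact solution $u$ of \eqref{math201801-01} is substituted into the splitting scheme \eqref{math201801-23}. There are three independent sources of error to bound: the spatial discretisation of each fractional Laplacian via the compact operators $\mathcal{D}_{\alpha,x}$ and $\mathcal{D}_{\beta,y}$, the Crank--Nicolson temporal discretisation, and the ADI perturbation term $\tfrac{k_t}{4}\mathcal{D}_{\alpha,x}\mathcal{D}_{\beta,y}(u^{n+1}_{i,j}-u^{n}_{i,j})$ introduced by factorising \eqref{math201801-222} into \eqref{math201801-23}. I would handle them in that order and then add them using the stated additivity rule $\mathcal{O}(f)+\mathcal{O}(f)=\mathcal{O}(f)$ for the $\mathcal{O}$ notation from Definition~\ref{math201801-10}.

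First I would treat the spatial terms. By Lemma~\ref{math201801-11}, the compact operator $\tfrac{1}{h_x^2}\tfrac{\delta_x^2}{1+\delta_x^2/12}$ approximates $\partial^2/\partial x^2$ to order $\mathcal{O}(h_x^4)$ at interior nodes; equivalently, writing $-(-\Delta_x)$ for that second derivative, we have $-\tfrac{1}{h_x^2}\tfrac{\delta_x^2}{1+\delta_x^2/12} = (-\Delta_x) + \mathcal{O}(h_x^4)$, i.e.\ the relative error in approximating $-\Delta_x$ is $\mathcal{O}(h_x^2)$ per power. Raising to the fractional power $\alpha/2$ (this is the step referred to via the Appendix-A identities \eqref{math201801-16}--\eqref{math201801-17} and formula \eqref{math201801-19}), the operator $\mathcal{D}_{\alpha,x}=(-\tfrac{1}{h_x^2}\tfrac{\delta_x^2}{1+\delta_x^2/12})^{\alpha/2}$ inherits an error of order $\mathcal{O}(h_x^{2\alpha})$ relative to $(-\Delta_x)^{\alpha/2}$ — this is where the "twice the order of the fractional derivative" claim of the abstract comes from. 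The same argument in the $y$-variable gives $\mathcal{D}_{\beta,y}=(-\Delta_y)^{\beta/2}+\mathcal{O}(h_y^{2\beta})$. Substituting these into \eqref{math201801-18} and using smoothness of $u$ (so the constants in the $\mathcal{O}$ bounds are uniform) yields the contributions $\mathcal{O}(h_x^{2\alpha})$ and $\mathcal{O}(h_y^{2\beta})$.

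Next, for the temporal part, the Crank--Nicolson step \eqref{math201801-22} is the standard midpoint-in-time rule: expanding $u^{n+1}_{i,j}$, $u^{n}_{i,j}$, $s^{n}_{i,j}$, $s^{n+1}_{i,j}$ in Taylor series about $t_{n+1/2}$ and using that $u$ solves \eqref{math201801-21} shows $\tfrac{u^{n+1}_{i,j}-u^{n}_{i,j}}{k_t}+[\mathcal{C}_x\mathcal{D}_{\alpha,x}+\mathcal{C}_y\mathcal{D}_{\beta,y}]\tfrac{u^{n}_{i,j}+u^{n+1}_{i,j}}{2}-\tfrac{s^n_{i,j}+s^{n+1}_{i,j}}{2}=\mathcal{O}(k_t^2)$. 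Finally, the ADI perturbation term: since $u^{n+1}_{i,j}-u^{n}_{i,j}=k_t\,\partial_t u+\mathcal{O}(k_t^2)=\mathcal{O}(k_t)$ by smoothness, the extra term $\tfrac{k_t}{4}\mathcal{D}_{\alpha,x}\mathcal{D}_{\beta,y}(u^{n+1}_{i,j}-u^{n}_{i,j})$ is $\mathcal{O}(k_t^2)$ (the operators $\mathcal{D}_{\alpha,x},\mathcal{D}_{\beta,y}$ are bounded on the discrete space and applied to a smooth function). Collecting the three contributions and invoking the additivity of $\mathcal{O}$ gives the truncation error $\mathcal{O}(h_x^{2\alpha})+\mathcal{O}(h_y^{2\beta})+\mathcal{O}(k_t^2)$.

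The main obstacle I anticipate is making rigorous the claim that the error propagates correctly through the fractional power $(\cdot)^{\alpha/2}$ — that $\bigl(A+\mathcal{O}(h_x^4)\bigr)^{\alpha/2}=A^{\alpha/2}+\mathcal{O}(h_x^{2\alpha})$ where $A=-\Delta_x$ with its discrete analogue. For integer powers this is immediate by the binomial expansion and the algebra of $\mathcal{O}$, but for non-integer $\alpha/2$ one must appeal to the spectral/eigenfunction definition of the fractional Laplacian in Definition~\ref{math201801-7} (or the Fourier-symbol representation), control the difference of symbols $\lambda^{\alpha/2}-\mu^{\alpha/2}$ via the mean value theorem when $\lambda-\mu=\mathcal{O}(h_x^4)\lambda$, and check uniformity of the constant over the spectrum — this is precisely the content that is deferred to \emph{Appendix A}. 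Everything else is routine Taylor expansion together with the smoothness hypothesis on $u$ stated after \eqref{math201801-03}.
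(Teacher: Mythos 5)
Your proposal is correct for the statement as literally written and, for the part it covers, follows essentially the same route as the paper: the same three-way decomposition (compact spatial operators, Crank--Nicolson in time, ADI factorisation term), and the same key step of converting the fourth-order accuracy of the compact second-derivative operator into $\mathcal{O}(h_x^{2\alpha})$, $\mathcal{O}(h_y^{2\beta})$ errors for the fractional powers via the multiplicative property $(\mathcal{O}(h_z^4))^{\gamma/2}=\mathcal{O}(h_z^{2\gamma})$ of Definition~\ref{math201801-10}. The differences are these. First, scope: the paper's proof does not stop at the local truncation estimate $\mid R_{i,j}^{n+1}\mid \leq c\,k_t(\mathcal{O}(h_x^{2\alpha})+\mathcal{O}(h_y^{2\beta})+\mathcal{O}(k_t^2))$; it goes on to form the error equation $(I+S_x)(I+T_y)e^{n+1}=(I-S_x)(I-T_y)e^{n}+R^{n+1}$ in (\ref{math201801-312}), invokes the stability bounds from Theorem~\ref{math201801-26} (spectral norms of $(I+S_x)^{-1}(I-S_x)$, $(I+T_y)^{-1}(I-T_y)$ and of $(I+S_x)^{-1}(I+T_y)^{-1}$ at most one), and sums by induction to obtain the global bound $\parallel e^{n}\parallel\leq\sum_k\parallel R^{k}\parallel\leq C(\mathcal{O}(h_x^{2\alpha})+\mathcal{O}(h_y^{2\beta})+\mathcal{O}(k_t^2))$; your proof omits this propagation step, which is fine for the truncation error per se but does not reproduce the convergence conclusion the paper extracts from the same theorem. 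Second, the obstacle you flag -- justifying $\bigl(A+\mathcal{O}(h_x^4)\bigr)^{\alpha/2}=A^{\alpha/2}+\mathcal{O}(h_x^{2\alpha})$ for non-integer powers -- is \emph{not} actually resolved in Appendix~A, which only supplies the spectral decomposition and positivity of the eigenvalues; the paper itself closes this step purely by the formal $\mathcal{O}$-algebra, so your spectral/mean-value-theorem sketch is, if anything, more careful than the paper's argument rather than a deferral to it. Third, your explicit $\mathcal{O}(k_t^2)$ bound on the ADI perturbation term is stated only informally in the paper (Section~3, ``the additional term is of higher order''), and your parenthetical that $\mathcal{D}_{\alpha,x},\mathcal{D}_{\beta,y}$ are ``bounded on the discrete space'' should be replaced by the correct statement that they are $\mathcal{O}(1)$ when applied to a sufficiently smooth function (their norms grow like $h_x^{-\alpha}$, $h_y^{-\beta}$ on general grid functions), but this matches the level of rigour in the paper.
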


\begin{proof}
Let $u(x_{i},y_{j},t_{n})$ be the exact solution of (\ref{math201801-01})--(\ref{math201801-03}) and $u_{i,j}^{n}$ be the solution of the numerically recurrence scheme (\ref{math201801-23}).
First, we derive the principal error term associated with discretization of the Riesz fractional derivative operators. we note that by considering the arbitrary order $\gamma$ and variable $z$, based on the multiplication property of the order "$\mathcal{O}$", (see Definition \ref{math201801-10}) we have the following relation
\begin{equation*}
(\mathcal{O}(\mathit{h}_{z}^{4}))^{\frac{\gamma}{2}}=\mathcal{O}((\mathit{h}_{z}^{4})^{\frac{\gamma}{2}})=\mathcal{O}(\mathit{h}_{z}^{2\gamma}).
\end{equation*}
By applying the Lemma \ref{math201801-05} on the Eqs. (\ref{math201801-16}), (\ref{math201801-17}), (see \emph{Appendix A}) and smoothness of the exact solution $u$, we have
\begin{equation*}
(\frac{\partial^{\alpha}u}{\partial
|x|^{\alpha}})_{i,j}=-\mathcal{D}_{\alpha,x}u_{i,j}+\mathcal{O}(\mathit{h}_{x}^{2\alpha}),
\end{equation*}
and
\begin{equation*}
(\frac{\partial^{\beta}u}{\partial
|y|^{\beta}})_{i,j}=-\mathcal{D}_{\beta,y}u_{i,j}+\mathcal{O}(\mathit{h}_{y}^{2\beta}).
\end{equation*}
Second, we discuss the local truncation error for scheme (\ref{math201801-23}). We use the two-dimensional case of (\ref{math201801-19})-(\ref{math201801-20}) and Crank-Nicolson scheme to do the discretization in space and time directions, respectively. Substitution in to the expression for (\ref{math201801-01}) yields
\begin{equation*}\label{math201801-30}
[1+\frac{\mathit{k}_{t}}{2}\mathcal{C}_{x}\mathcal{D}_{\alpha,x}+\frac{\mathit{k}_{t}}{2}\mathcal{C}_{y}\mathcal{D}_{\beta,y}]u(x_{i},y_{j},t_{n+1})
\end{equation*}
\begin{equation}\label{math201801-30}
=[1-\frac{\mathit{k}_{t}}{2}\mathcal{C}_{x}\mathcal{D}_{\alpha,x}-\frac{\mathit{k}_{t}}{2}\mathcal{C}_{y}\mathcal{D}_{\beta,y}]u(x_{i},y_{j},t_{n})+R_{i,j}^{n+1}
\end{equation}
where
\begin{equation}\label{math201801-311}
\mid R_{i,j}^{n+1} \mid \leq c \mathit{k}_{t}(\mathcal{O}(\mathit{h}_{x}^{2\alpha})+\mathcal{O}(\mathit{h}_{y}^{2\beta})+\mathcal{O}(\mathit{k}_{t}^2)).
\end{equation}
Finally, we give the global discretization error for numerically approximated scheme (\ref{math201801-23}).
Taking $e_{i,j}^{n}=u(x_{i},y_{j},t_{n})-u_{i,j}^{n}$, and subtracting (\ref{math201801-23}) from (\ref{math201801-30}), yields
\begin{equation}\label{math201801-312}
(I+S_{x})(I+T_{y})e^{n+1}=(I-S_{x})(I-T_{y})e^{n}+R^{n+1}
\end{equation}
where $S_{x}$ and $T_{y}$ are defined in (\ref{math201801-27}) and (\ref{math201801-28}) of \emph{Appendix B}, respectively, and
\begin{equation*}
e^{n}=[e_{1,1}^{n},e_{2,1}^{n},...,e_{M_{1}-1,1}^{n},e_{1,2}^{n},e_{2,2}^{n},...,e_{M_{1}-1,2}^{n},...,
\end{equation*}
\begin{equation*}
e_{1,M_{2}-1}^{n},e_{2,M_{2}-1}^{n},...,e_{M_{1}-1,M_{2}-1}^{n}]^{T},
\end{equation*}
\begin{equation*}
R^{n}=[R_{1,1}^{n},R_{2,1}^{n},...,R_{M_{1}-1,1}^{n},R_{1,2}^{n},R_{2,2}^{n},...,R_{M_{1}-1,2}^{n},...,
\end{equation*}
\begin{equation*}
R_{1,M_{2}-1}^{n},R_{2,M_{2}-1}^{n},...,R_{M_{1}-1,M_{2}-1}^{n}]^{T}.
\end{equation*}
Now from (\ref{math201801-311}) one can write
\begin{equation}\label{math201801-313}
\mid R_{i,j}^{n+1} \mid \leq c \mathit{k}_{t}(\mathcal{O}(\mathit{h}_{x}^{2\alpha})+\mathcal{O}(\mathit{h}_{y}^{2\beta})+\mathcal{O}(\mathit{k}_{t}^2)).
\end{equation}
Since $S_{x}$ and $T_{y}$ commute, then from (\ref{math201801-312})
\begin{equation*}
e^{n+1}=(I+S_{x})^{-1}(I-S_{x})(I+T_{y})^{-1}(I-T_{y})e^{n}+(I+S_{x})^{-1}(I+T_{y})^{-1}R^{n+1}.
\end{equation*}
With taking the 2-norm on both sides of the above relation, we have
\begin{equation*}
\parallel e^{n+1}\parallel \leq \parallel e^{n}\parallel + \parallel R^{n+1}\parallel.
\end{equation*}
Since from Theorem \ref{math201801-26} one can write
\begin{equation*}
\parallel (I+S_{x})^{-1}(I-S_{x})(I+T_{y})^{-1}(I-T_{y})\parallel
\end{equation*}
\begin{equation*}
\leq \parallel (I+S_{x})^{-1}(I-S_{x})\parallel . \parallel(I+T_{y})^{-1}(I-T_{y})\parallel \leq 1
\end{equation*}
and
\begin{equation*}
\parallel (I+S_{x})^{-1}(I+T_{y})^{-1}\parallel \leq \parallel (I+S_{x})^{-1}\parallel . \parallel(I+T_{y})^{-1}\parallel \leq 1.
\end{equation*}
We use mathematical induction to create the relation between error in final step and errors created in earlier steps, i.e.,
\begin{equation*}
\parallel e^{n+1}\parallel \leq \parallel e^{n}\parallel + \parallel R^{n+1}\parallel \leq \parallel e^{n-1}\parallel+ \parallel R^{n}\parallel + \parallel R^{n+1}\parallel.
\end{equation*}
Since $\parallel e^{0}\parallel=\parallel u(x_{i},y_{j},t_{0})-u_{i,j}^{0}\parallel$ from (\ref{math201801-313}) we conclude that
\begin{equation*}
\parallel e^{n}\parallel \leq \sum_{k=1}^{n}\parallel R^{k}\parallel \leq C(\mathcal{O}(\mathit{h}_{x}^{2\alpha})+
\mathcal{O}(\mathit{h}_{y}^{2\beta})+\mathcal{O}(\mathit{k}_{t}^2)),
\end{equation*}
where $C=n c k_{t}$.\\
It is shown that the solution to (\ref{math201801-01})--(\ref{math201801-03}) can be approximated by numerical scheme (\ref{math201801-23}) with the discretization error $\mathcal{O}(\mathit{h}_{x}^{2\alpha})+
\mathcal{O}(\mathit{h}_{y}^{2\beta})+\mathcal{O}(\mathit{k}_{t}^2)$.
\end{proof}
By Theorems \ref{math201801-26} and \ref{math201801-29} and Lax's equivalence theorem \cite{Smith1985}, the scheme (\ref{math201801-23}) is convergent.

\section{Numerical experiments}

In this section, we will present an example of two dimensional Riesz space fractional diffusion equations. We shall compare the numerical solutions with the exact solutions. To demonstrate the accuracy of preferred method, we have computed not only maximum errors, but also estimated convergence rates separately in spatial and temporal directions.\\
The maximum absolute errors between the exact and the numerical solutions
\begin{equation*}
E_{\infty}(\mathit{h},\mathit{k}_{t})=\max_{i,j} \mid u(x_{i},y_{j},t_{N})-u_{i,j}^{N}\mid,
\end{equation*}
are measured in our examples. Furthermore, the spatial convergence order, denoted by
\begin{equation*}
\mbox{Convergence Rate1}=\log_{2}(E_{\infty}(2\mathit{h},\mathit{k}_{t})/E_{\infty}(\mathit{h},\mathit{k}_{t})),
\end{equation*}
for sufficiently small $\mathit{k}_{t}$, and the temporal convergence order, denoted by
\begin{equation*}
\mbox{Convergence Rate2}=\log_{2}(E_{\infty}(\mathit{h},2\mathit{k}_{t})/E_{\infty}(\mathit{h},\mathit{k}_{t})),
\end{equation*}
when $\mathit{h}$ is sufficiently small, are reporting. The numerical results given by these examples justify our theoretical results.

\begin{ex}
\label{math201801-34}
We consider the following two dimensional Riesz space fractional diffusion equation with the initial and homogeneous Dirichlet boundary conditions:
\begin{eqnarray*}
\frac{\partial u(x,y,t)}{\partial
t}=\mathcal{C}_{x}\frac{\partial^{\alpha}u(x,y,t)}{\partial
|x|^{\alpha}}+\mathcal{C}_{y}\frac{\partial^{\beta}u(x,y,t)}{\partial
|y|^{\beta}}+s(x,y,t),
\end{eqnarray*}
\begin{eqnarray*}
0<t<2 ,\quad 0<x,y<\pi,
\end{eqnarray*}
\begin{eqnarray*}
u(x,y,0)=x^2y^2(\pi-x)(\pi-y), \quad 0\leq x,y \leq \pi,
\end{eqnarray*}
\begin{eqnarray*}
u(0,y,t)=u(\pi,y,t)=u(x,0,t)=u(x,\pi,t)=0, \quad 0\leq t \leq 2,\quad 0\leq x,y \leq \pi,
\end{eqnarray*}
with source function
\begin{equation*}
s(x,y,t)=\frac{\mathcal{C}_{x}y^2(\pi-y)e^{-t}}{2cos(\frac{\pi\alpha}{2})}\Theta(x,\alpha)+
\frac{\mathcal{C}_{y}x^2(\pi-x)e^{-t}}{2cos(\frac{\pi\beta}{2})}\Theta(y,\beta)
\end{equation*}
\begin{equation*}
-x^2y^2(\pi-x)(\pi-y)e^{-t}
\end{equation*}
where $\Theta(z,\gamma)=\frac{2\pi z^{2-\gamma}}{\Gamma(3-\gamma)}-\frac{6 z^{3-\gamma}}{\Gamma(4-\gamma)}+\frac{\pi^{2}(\pi-z)^{-\gamma} }{\Gamma(1-\gamma)}-\frac{2\pi (\pi-z)^{1-\gamma}}{\Gamma(2-\gamma)}+\frac{2 (\pi-z)^{2-\gamma}}{\Gamma(3-\gamma)}$ and $\mathcal{C}_{x}=\mathcal{C}_{y}=0.25$. The corresponding exact solution is $u(x,y,t)=x^2y^2(\pi-x)(\pi-y)e^{-t}$.

The table \ref{math201801-35} shows maximum absolute errors and related estimated convergence rates with different values for  $\mathit{h}_{x}=\mathit{h}_{y}$ as $0.1\pi$, $0.05\pi$, $0.025\pi$, $0.0125\pi$ and $0.00625\pi$, fixed value $\mathit{k}_{t}=0.001$ whereas Table \ref{math201801-36} presents them with different values for $\mathit{k}_{t}$ as $0.1$, $0.05$, $0.025$, $0.0125$ and $0.00625$ and fixed value $\mathit{h}_{x}=\mathit{h}_{y}=0.001\pi$. Whose fractional derivative orders $\alpha=1.8$ \& $\beta=1.6$ and $\alpha=1.8$ \& $\beta=1.8$ are considered separately in two tables. From Tables \ref{math201801-35} and \ref{math201801-36}, we find the experimental convergence orders are approximately twice the smallest fractional derivative and two in spatial and temporal directions, respectively. The numerical Example results are provided to show that the proposed approximation method is computationally efficient.
\begin{table}
\begin{center}
\caption{The maximum errors and convergence rates for the compact ADI method for solving 2D Riesz space FDE with halved spatial step sizes and $\mathit{k}_{t}= 0.001$}
\label{math201801-35}       
\begin{tabular}{lllll}
\hline\noalign{\smallskip}
             & Max Error       & Convergence   &     Max Error       & Convergence  \\
$\mathit{h}_{x}=\mathit{h}_{y}$&  $\alpha=1.8$, $\beta=1.6$ & Rate & $\alpha=\beta=1.8$ & Rate  \\
\noalign{\smallskip}\hline\noalign{\smallskip}
$0.10000\pi$&      $7.36901e-003$&             &     $5.17481e-003$&             \\
$0.05000\pi$&      $9.30631e-004$&    $2.98519$&     $4.92617e-004$&    $3.39297$\\
$0.02500\pi$&      $1.10146e-004$&    $3.07879$&     $4.45651e-005$&    $3.46648$\\
$0.01250\pi$&      $1.24084e-005$&    $3.15003$&     $3.85100e-006$&    $3.53261$\\
$0.00625\pi$&      $1.35755e-006$&    $3.19224$&     $3.21297e-007$&    $3.58325$\\
\noalign{\smallskip}\hline
\end{tabular}
\end{center}
\end{table}
\begin{table}
\begin{center}
\caption{The maximum errors and convergence rates for the compact ADI method for solving 2D Riesz space FDE with halved temporal step sizes and $\mathit{h}_{x}=\mathit{h}_{y}= 0.001\pi$}
\label{math201801-36}       
\begin{tabular}{lllll}
\hline\noalign{\smallskip}
             & Max Error       & Convergence   &     Max Error       & Convergence  \\
$\mathit{k}_{t}$ & $\alpha=1.8$, $\beta=1.6$ & Rate & $\alpha=\beta=1.8$ & Rate  \\
\noalign{\smallskip}\hline\noalign{\smallskip}
$0.10000$&      $8.53972e-003$&             &     $6.78334e-003$&             \\
$0.05000$&      $2.50889e-003$&    $1.76714$&     $2.04295e-003$&    $1.73134$\\
$0.02500$&      $6.85153e-004$&    $1.87255$&     $5.74989e-004$&    $1.82905$\\
$0.01250$&      $1.77936e-004$&    $1.94507$&     $1.53015e-004$&    $1.90986$\\
$0.00625$&      $4.34099e-005$&    $2.03526$&     $3.89268e-005$&    $1.97484$\\
\noalign{\smallskip}\hline
\end{tabular}
\end{center}
\end{table}

\end{ex}

\section{Conclusions}
In the present work, a high order compact ADI method for solving the two dimensional Riesz space fractional diffusion equation has been established. The method is spatially twice the smallest fractional derivative- and temporally second-order accuracy. It is shown through a matrix analysis that it is unconditionally stable. Numerical results are provided to verify the accuracy and efficiency of the preferred method.
\section*{Acknowledgement}
The authors express their deep gratitude to the Research Council of University of Mohaghegh Ardabili for funding this research work.

\section*{Appendix A}
We consider the fourth-order compact approximations for the second-order derivative operators based on Lemma \ref{math201801-11} (also see \cite{Ding2009600})
\begin{equation}\label{math201801-16}
(\frac{\partial^{2}u}{\partial
x^{2}})_{i,j}= \frac{1}{\mathit{h}_{x}^2}\frac{\delta_{x}^2}{1+\frac{\delta_{x}^2}{12}}u_{i,j}+\mathcal{O}(\mathit{h}_{x}^4),\quad \mbox{for}\quad i=0,1,...,M_{1}\quad \mbox{and fix}\quad j
\end{equation}
\begin{equation}\label{math201801-17}
(\frac{\partial^{2}u}{\partial
y^{2}})_{i,j}= \frac{1}{\mathit{h}_{y}^2}\frac{\delta_{y}^2}{1+\frac{\delta_{y}^2}{12}}u_{i,j}+\mathcal{O}(\mathit{h}_{y}^4),\quad \mbox{for}\quad j=0,1,...,M_{2}\quad \mbox{and fix}\quad i
\end{equation}
where $\delta_{x}^2$ and $\delta_{y}^2$ are the standard second-order central difference operators in x- and y- directory respectively.
If the boundary values at $i=0$ and $i=M_{1}$, $j>0$, are known, these $(M_{1}-1)$ equation for $i=1,2,...,M_{1}-1$ can be written in matrix form
\begin{equation*}
((\frac{1}{\mathit{h}_{x}^2}\frac{\delta_{x}^2}{1+\frac{\delta_{x}^2}{12}})_{i,j})=A_{x}^{-1}B_{x},\quad i,j=1,2,...,M_{1}-1,
\end{equation*}
where $A_{x}=\frac{\mathit{h}_{x}^2}{12}diag(1,10,1)$ and $B_{x}=diag(1,-2,1)$ are tridiagonal matrices of $M_{1}-1$ order.
And if the boundary values at $j=0$ and $j=M_{2}$, $i>0$, are known, these $(M_{2}-1)$ equation for $j=1,2,...,M_{2}-1$ can be written in matrix form
\begin{equation*}
((\frac{1}{\mathit{h}_{y}^2}\frac{\delta_{y}^2}{1+\frac{\delta_{y}^2}{12}})_{i,j})=A_{y}^{-1}B_{y},\quad i,j=1,2,...,M_{2}-1,
\end{equation*}
where $A_{y}=\frac{\mathit{h}_{y}^2}{12}diag(1,10,1)$ and $B_{y}=diag(1,-2,1)$ are tridiagonal matrices of $M_{2}-1$ order.

Referring to the Lemma \ref{math201801-08} our achievement on that the eigenvalues of the matrix of the $\frac{-1}{\mathit{h}_{x}^2}\frac{\delta_{x}^2}{1+\frac{\delta_{x}^2}{12}}$ operator is as follows:
\begin{align*}
\lambda_{j}&=[\frac{\mathit{h}_{x}^2}{12}(10+2\cos(j\pi/M_{1}))]^{-1}\times (-1)\times[-2+2\cos(j\pi/M_{1})]\\
&=\frac{12}{\mathit{h}_{x}^2}\frac{2-2\cos (j\pi/M_{1})}{12-2+2\cos(j\pi/M_{1})}\\
&=\frac{12}{\mathit{h}_{x}^2}\frac{4\sin^{2}(j\pi/2M_{1})}{12-4\sin^{2}(j\pi/2M_{1})}\\
&=\frac{12}{\mathit{h}_{x}^2}\sin^{2}(j\pi/2M_{1})(3-\sin^{2} (j\pi/2M_{1}))^{-1}
\end{align*}
Since the eigenvalues of matrix $(\frac{-1}{\mathit{h}_{x}^2}\frac{\delta_{x}^2}{1+\frac{\delta_{x}^2}{12}})$ are distinct positive. So there is a pair of matrices $\mathbf{D}_{x}$ and $P$ that $\mathbf{D}_{x}$ is a diagonal matrix which members are eigenvalues of matrix $-A_{x}^{-1}B_{x}$ and the columns of the matrix $P$ are eigenvectors corresponding to the these eigenvalues and we have
\begin{equation*}
-A_{x}^{-1}B_{x}=P\mathbf{D}_{x}P^{-1}
\end{equation*}
And similarly, in the direction of the second axis, there are pair matrices $\mathbf{D}_{y}$ and $Q$ which have the following relation
\begin{equation*}
-A_{y}^{-1}B_{y}=Q\mathbf{D}_{y}Q^{-1}.
\end{equation*}
As respects the eigenvalues of the matrices $-A_{x}^{-1}B_{x}$ and $-A_{y}^{-1}B_{y}$ are positive and distinct, and the matrices $A_{x}$, $B_{x}$, $A_{y}$ and $B_{y}$ are all symmetric, so the matrices $-A_{x}^{-1}B_{x}$ and $-A_{y}^{-1}B_{y}$ are symmetric positive definite.

\section*{Appendix B}
In this section, the matrix form of the operators $\frac{\mathit{k}_{t}}{2}\mathcal{C}_{x}\mathcal{D}_{\alpha,x}$ and $\frac{\mathit{k}_{t}}{2}\mathcal{C}_{y}\mathcal{D}_{\beta,y}$, which is displayed by $S_{x}$ and $T_{y}$ respectively, is represented.
\begin{equation}\label{math201801-27}
S_{x}=m\{\frac{\mathit{k}_{t}}{2}\mathcal{C}_{x}\mathcal{D}_{\alpha,x}\}=
\frac{\mathit{k}_{t}}{2}\mathcal{C}_{x}P\mathbf{D}^{\frac{\alpha}{2}}_{x}P^{-1}
\end{equation}
where $\mathbf{D}_{x}=diag(\lambda_{1}\quad \lambda_{2}\quad ...\quad \lambda_{M_{1}-1})$ and $P=(x_{1}\quad x_{2}\quad ...\quad x_{M_{1}-1})$ in which
\begin{equation*}
x_{i}=\left(
\begin{array}{c}
  \sin(1i\pi/M_{1}) \\
  \sin(2i\pi/M_{1}) \\
  \sin(3i\pi/M_{1}) \\
  \vdots \\
  \sin((M_{1}-1)i\pi/M_{1}) \\
\end{array}
\right),
\end{equation*}
\begin{equation*}
\lambda_{i}=\frac{12\sin^{2}(i\pi/2M_{1})}{\mathit{h}_{x}^2(3-\sin^{2} (i\pi/2M_{1}))}, \quad i=1,2,...,M_{1}-1,
\end{equation*}
and similarly
\begin{equation}\label{math201801-28}
T_{y}=m\{\frac{\mathit{k}_{t}}{2}\mathcal{C}_{y}\mathcal{D}_{\beta,y}\}=
\frac{\mathit{k}_{t}}{2}\mathcal{C}_{y}Q\mathbf{D}^{\frac{\beta}{2}}_{y}Q^{-1}
\end{equation}
where $\mathbf{D}_{y}=diag(\gamma_{1}\quad \gamma_{2}\quad ...\quad \gamma_{M_{2}-1})$ and $Q=(y_{1}\quad y_{2}\quad ...\quad y_{M_{2}-1})$ in which
\begin{equation*}
y_{j}=\left(
\begin{array}{c}
  \sin(1j\pi/M_{2}) \\
  \sin(2j\pi/M_{2}) \\
  \sin(3j\pi/M_{2}) \\
  \vdots \\
  \sin((M_{2}-1)j\pi/M_{2}) \\
\end{array}
\right),
\end{equation*}
\begin{equation*}
\gamma_{j}=\frac{12\sin^{2}(j\pi/2M_{2})}{\mathit{h}_{y}^2(3-\sin^{2} (j\pi/2M_{2}))}, \quad j=1,2,...,M_{2}-1,
\end{equation*}
By attention to the positivity of the eigenvalues of the matrices $-A_{x}^{-1}B_{x}$ and $-A_{y}^{-1}B_{y}$ for every orders $M_{1}-1$ and $M_{2}-1$ respectively, eigenvalues of matrices of $S_{x}$ and $T_{y}$ are positive and we have from Appendix $A$ that the two matrices $-A_{x}^{-1}B_{x}$ and $-A_{y}^{-1}B_{y}$ are real and symmetric therefore the two matrices $S_{x}$ and $T_{y}$ are real and symmetric. Moreover, note that the two matrices $S_{x}$ and $T_{y}$ commute, i.e.
\begin{equation*}
S_{x}\otimes T_{y}=T_{y}\otimes S_{x}.
\end{equation*}

\end{document}